\begin{document}

\title{Some functorial properties of Schatten classes.\thanks{The research work presented here  was jointly carried out by the authors while being funded by Council of Scientific and Industrial Research- Government of India  and University Grants Commission-Government of India.}}


\author{Lav Kumar Singh        \and
        A.K. Verma 
}


\institute{Lav Kumar Singh \at
             +918448746990\\              
              \email{\url{lavksingh@hotmail.com; lav17_sps@jnu.ac.in}}            \\
             \emph{School of Physical Sciences, J.N.U, New Delhi}   
           \and
           Arvind Kumar Verma \at
           +919425852875\\
             \email{\url{arvindamathematica@gmail.com}}\\
              \emph{Dr. Harisingh Gour Central University, Sagar(M.P)-India}
}

\date{Received: date / Accepted: date}

\maketitle

\begin{abstract}
In this paper, we begin with the study of elements in $C^*$-algebras which are mapped to Schatten class ideals through the faithful left regular representation. We further give some functorial properties of Schatten classes on the category of representations of a  $C^*$-algebra and category of unitary representations of a group. 
\keywords{Schatten classes \and Representations \and Categories\and Functors}
\subclass{ 47B10 \and 46M99}
\end{abstract}
\section{Introduction}
\label{intro}
We aim to explore how Schatten classes on Hilbert spaces influence the representations of $C^*$-algebras or locally compact Hausdorff groups. We begin our study in the second section with the left regular representation of $L^\infty(X)$ on $L^2(X)$, where $(X,\mathcal B(X),\mu)$ is a $\sigma$-finite measure space. We give a few conditions on measure properties of $X$ such that the pullback of $p^{th}$-Schatten ideal is not trivial (zero). This establishes the fact that these pullbacks are not trivially zero. Then in the third section, we give a nice functor on category of representations of a $C^*$-algebra and category of representations of a group which is associated to Schatten classes. Let us begin with a few preliminaries.
\subsection{Measure space}
\label{intro 1}
Let $(X,\mu)$ be a measure space. A set $A\subset X$ is said to be {\bf atom} of $\mu$ if $\mu(A)>0$ and for every $B\subset A$, either $\mu(B)=0$ or $\mu(B)=\mu(A)$. Subsets of $X$ which are not atoms are said to be {\bf diffuse}. A measure is said to be atomic if there exists atleast one atom, else the measure is said to be without atom.
\subsection{Schatten classes on Hilbert spaces }
\label{intro 2} 
Let $\mathcal H$ be Hilbert space and $T\in B(\mathcal H)$. For $p\in[1,\infty)$ define Schatten p-norm of $T$ as $$||T||_p=\sum_{n\geq 1}(s_n^p(T))^{1/p},$$ where $s_1(T)\geq s_2(T)\geq\hdots s_n(T)\geq 0$ are the singular values of operator $|T|$. Clearly $||T||_p=\text{tr}(|T|^p)$. The \textbf{p-th Schatten class operators} are those $T\in B(\mathcal H)$ for which $||T||_p <\infty$. Let $S_p(\mathcal H)$ denotes the collection of all p-th Schatten class operators in $B(\mathcal H)$. Then $S_p(\mathcal H)$ is an ideal of $B(\mathcal H)$ and $||\cdot||_p$ is a norm on $S_p(\mathcal H)$ which makes it a Banach $*$-algebra (involutions are adjoint maps). Further, we have the containment $S_p(\mathcal H)\subset S_q(\mathcal H)$ for each $q>p$. The Schatten p-norm satisfies $||\cdot||_q\leq ||\cdot ||_p$ for all $q\geq p$.
Operators in $S_p(\mathcal H)$ are compact, and $S_p(\mathcal H)$ contains all finite rank operators.  $S_p(\mathcal H)$ is reflexive for $1<p<\infty$ since they are uniformly convex. We also know that $S_p(\mathcal H)^*=S_q(\mathcal H)$, where $\frac{1}{p}+\frac{1}{q}=1$. The ideal $S_2(\mathcal H)$ is a Hilbert space with inner product $\left<A,B\right>=\operatorname{Tr}(B^*A)$. A detailed discussion about these facts can be found in \cite{Palmer}. We will use the notation $S_\infty(\mathcal H)=B_0(\mathcal H)$ for the algebra of compact operators.
\subsection{Representations of $C^*$ algebras and locally compact groups.} 
\label{intro 3}
A representation  of a $C^*$-algebra $\mathcal A$ is a continuous $*$-homomorphism $\mathcal A \to B(\mathcal H)$ for some Hilbert space $\mathcal H$. Injective representations are called \textbf{faithful} representations. Gelfand Naimark theorem states that every $C^*$-algebra possess a faithful representation on some Hilbert space.\cite[Ch.3]{Murphy}\\ 
A \textbf{unitary representation} of a locally compact Hausdorff group $G$ is a homomorphism $G\to U(\mathcal H)$ for some Hilbert space $H$, continuous with respect to strong topology on $U(\mathcal H)$, where $U(\mathcal H)$ denotes the group of unitary operators on $\mathcal H$ \cite[Ch.3]{Folland}.\\

\subsection{Categories}
\label{intro 4}
By $\bf Ban_1$, we denote the category whose objects are Banach spaces and morphisms are contractive linear maps.  $\mathcal Rep(\mathcal A)$ denotes the category whose objects are representations of $C^*$-algebra $\mathcal A$ and natural transformations generated by intertwining operators define morphisms, i.e. $\phi:\pi_1\to\pi_2$ is a morphism between two representations $\pi_1:\mathcal A\to B(\mathcal H_1)$ and $\pi_2:\mathcal A\to B(\mathcal H_2)$, then there exists an operator $U:\mathcal H_1\to\mathcal H_2$ such that for each $x\in \mathcal A$ the  diagram in fig. \ref{Fig1} commutes.\\

\begin{figure}
	\begin{tikzcd}
	\mathcal H_1 \arrow[d, "U"] \arrow[rr, "\pi_1(x)"] &  & \mathcal H_1 \arrow[d, "U"] \\
	\mathcal H_2 \arrow[rr, "\pi_2(x)"]                &  & \mathcal H_2               
	\end{tikzcd}
	\caption{}\label{Fig1}
\end{figure}
Similarly, we denote the category of unitary representations of a group $G$ by $\mathcal Rep(G)$ where natural transformations generated by intertwining operators define morphisms.\\
An assignment $\mathcal F:\mathcal C\to\mathcal D$, where $\mathcal C, \mathcal D$ are any two categories, is said to be a functor if it assigns to each object $C\in \mathcal C$ a unique object $\mathcal F(C)\in\mathcal D$ and to  each morphism $f:C_1\to C_2$ in $\mathcal C$ a unique morphism $\mathcal F(f):\mathcal F(C_1)\to\mathcal F(C_2)$ in $\mathcal D$ such that $\mathcal F(\operatorname{Id_C})=\operatorname{Id}_{\mathcal F(C)}$ and $\mathcal F(g\circ f)=\mathcal F(g)\circ \mathcal F(f)$ for any pair of morphisms $f,g$ in $\mathcal C$ (given that they can be composed).
\section{Schatten type functions in $L^\infty(X)$.}
\label{sec:1}
Let $X$ be a measure space with $\sigma$-finite Borel measure $\mu$. Consider the faithful representation of $C^*$-algebra $L^\infty(X)$ by $$\pi:L^\infty(X)\to B(L^2(X))$$ such that $\pi(f)g=f.g$ for each $f\in L^\infty(X)$ and $g\in L^2(X)$. For each $1\leq p\leq \infty$, we define the set $$ S_p(X)=\pi^{-1}\left( S_p(L^2(X))\right),$$ which is a collection of all the functions in $L^\infty(X)$ which are mapped to Schatten $p$-class operators on Hilbert space $L^2(X)$. Clearly, $S_p(X)$ is an ideal in $L^\infty(X)$.
\begin{lemma}
	For group $\mathbb{Z}$ with Haar measure(counting measure), $S_p(\mathbb Z)=\ell^p(\mathbb Z)$ for all $1\leq p\leq \infty$.
\end{lemma}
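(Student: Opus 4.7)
The plan is to exploit the fact that with counting measure, $L^2(\mathbb{Z}) = \ell^2(\mathbb{Z})$ carries the standard orthonormal basis $\{e_n := \chi_{\{n\}}\}_{n \in \mathbb{Z}}$, and this basis simultaneously diagonalizes every multiplication operator: for $f \in \ell^\infty(\mathbb{Z})$ we have $\pi(f) e_n = f(n) e_n$. Hence $\pi(f)$ is diagonal in this basis, and the task reduces to computing Schatten norms of diagonal operators.

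First I would identify the absolute value. Because $\pi$ is a $*$-homomorphism, $\pi(f)^*\pi(f) = \pi(|f|^2)$, whose unique positive square root is $\pi(|f|)$ (either by continuous functional calculus or by direct inspection of the diagonal form). Thus $|\pi(f)| = \pi(|f|)$ is diagonal with non-negative entries $\{|f(n)|\}_{n \in \mathbb{Z}}$, and these, rearranged in non-increasing order, are precisely the singular values $s_k(\pi(f))$.

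Substituting into the definition of the Schatten $p$-norm, for $1 \leq p < \infty$ I obtain
\[ \|\pi(f)\|_p^p \;=\; \sum_{n \in \mathbb{Z}} |f(n)|^p, \]
so $\pi(f) \in S_p(\ell^2(\mathbb{Z}))$ if and only if $f \in \ell^p(\mathbb{Z})$, establishing $S_p(\mathbb{Z}) = \ell^p(\mathbb{Z})$ in the finite range. At the endpoint $p = \infty$, using the convention $S_\infty = B_0$ recorded in the preliminaries, I would observe that a diagonal operator is compact precisely when its diagonal sequence vanishes at infinity; the endpoint identification of $S_\infty(\mathbb{Z})$ with $\ell^\infty(\mathbb{Z})$ then requires the operator-norm reading of $S_\infty$. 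The computation is essentially mechanical once the diagonal form is in hand; the only delicate point, and hence the main obstacle, is the bookkeeping at $p = \infty$ — everything else is immediate from the diagonal structure of $\pi(f)$ and the fact that the Schatten sum is insensitive to the bilateral indexing of $\mathbb{Z}$.
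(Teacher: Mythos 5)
Your proof is correct and follows essentially the same route as the paper's: both diagonalize $\pi(f)$ in the standard orthonormal basis $\{e_n\}$ of $\ell^2(\mathbb Z)$ and read off $\|\pi(f)\|_p^p=\sum_{n}|f(n)|^p$, with your identification $|\pi(f)|=\pi(|f|)$ merely making explicit a step the paper leaves implicit. Your handling of the endpoint is in fact sharper than the paper's ``case $p=\infty$ is trivial'': under the paper's own stated convention $S_\infty(\mathcal H)=B_0(\mathcal H)$ one would get $S_\infty(\mathbb Z)=c_0(\mathbb Z)$ rather than $\ell^\infty(\mathbb Z)$, so the claimed equality at $p=\infty$ does require reading $S_\infty$ as all of $B(\mathcal H)$ with the operator norm, exactly as you observe.
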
 
\begin{proof}
	Consider the standard orthonormal basis $\{e_i\}_{i\in \mathbb Z}$ of $\ell^2(\mathbb Z)$. Let $f\in \ell^\infty$. Then $f\in S_p(\mathbb Z)$ if and only if $$||\pi(f)||_p=\sum_{n\in\mathbb Z}\left<|f(n)|^pe_n,e_n\right>=\sum_{n\in \mathbb Z}|f(n)|^p<\infty$$
	Hence $S_p(\mathbb Z)=\ell^p(\mathbb Z)$ for all $1\leq p<\infty$. Case $p=\infty$ is trivial.
\end{proof}
\begin{remark}\label{Rem1}The collection $S_p(X)$ is not independent of the faithful representation. For example, consider the faithful representation $\pi\otimes \mathbb 1:\ell^2(\mathbb Z)\to B(\ell^2(\mathbb Z)\otimes \ell^2(\mathbb Z))$. In this case, none of the sequences is mapped to trace class operator.
\end{remark}
\begin{lemma}
	For group $\mathbb R$ with Haar measure, $S_p(\mathbb R)=\{0\}$ for all $1\leq p< \infty$ and $S_\infty(\mathbb R)=L^\infty(\mathbb R)$. Even for compact group $\mathbb S^1$ with Haar measure, $S_p(\mathbb S^1)=\{0\}$ for $1\leq p <\infty$ and $S_\infty(\mathbb S^1)=L^\infty(\mathbb S^1)$.
\end{lemma}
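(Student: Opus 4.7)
The plan is to exploit the fact that Lebesgue measure on $\mathbb{R}$ and normalized Haar measure on $\mathbb{S}^1$ are both diffuse (atomless and $\sigma$-finite), so that any multiplication operator $\pi(f)=M_f$ by a nonzero $L^\infty$ function fails to be compact. Since $S_p(\mathcal{H})\subset B_0(\mathcal{H})$ for $1\le p<\infty$, this immediately yields $S_p(\mathbb{R})=\{0\}$ and $S_p(\mathbb{S}^1)=\{0\}$ for such $p$. For $p=\infty$, the bound $\|\pi(f)\|_{\mathrm{op}}\le\|f\|_\infty$ together with $\pi(f)\in B(L^2(X))$ delivers the equality $S_\infty(X)=L^\infty(X)$ on the nose.

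The core step is the non-compactness claim. I would fix $f\in L^\infty(\mathbb{R})$ with $\|f\|_\infty>0$, choose $\varepsilon>0$ and a Borel set $A\subset\mathbb{R}$ with $0<\mu(A)<\infty$ on which $|f|\ge\varepsilon$ almost everywhere. Because $\mu$ is atomless on $A$, the standard range theorem for atomless measures gives a countable pairwise disjoint family $\{A_n\}_{n\ge 1}$ inside $A$ with common measure $\alpha>0$. Setting $\phi_n=\chi_{A_n}/\sqrt{\alpha}$ produces an orthonormal sequence in $L^2(\mathbb{R})$; the images $M_f\phi_n$ are supported on the disjoint $A_n$, hence pairwise orthogonal, and $\|M_f\phi_n\|_2^2=\alpha^{-1}\int_{A_n}|f|^2\,d\mu\ge\varepsilon^2$. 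An orthogonal sequence with terms of norm $\ge\varepsilon$ has no norm-convergent subsequence, so $M_f$ is not compact. The argument on $\mathbb{S}^1$ is verbatim the same, since normalized Haar measure on the circle is finite and atomless.

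The only non-formal ingredient is the equimeasurable countable partition of $A$, which is where the diffuseness of $\mu$ is genuinely used; it follows from the classical fact that an atomless finite measure attains every value in $[0,\mu(A)]$. That is the single step I expect to require care in a full write-up; everything else reduces to the routine orthogonality-and-norm-lower-bound obstruction to compactness, together with the containment $S_p\subset B_0$ recorded in Subsection 1.2.
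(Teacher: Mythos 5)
Your route is genuinely different from the paper's. The paper fixes a concrete orthonormal basis --- the Gabor basis $e_{n,m}(x)=e^{2\pi i m x}\chi_{[n,n+1]}$ on $L^2(\mathbb R)$, the Fourier basis on $L^2(\mathbb S^1)$ --- and computes $\|\pi(f)\|_p^p=\operatorname{tr}(|\pi(f)|^p)=\sum_{n,m}\int_n^{n+1}|f|^p\,d\mu$ directly; since the summand is independent of $m$, finiteness of the unconditional sum forces every $\int_n^{n+1}|f|^p\,d\mu$ to vanish, i.e.\ $f=0$. You instead reduce to the containment $S_p(\mathcal H)\subset B_0(\mathcal H)$ and show that a nonzero multiplication operator over an atomless measure is never compact. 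Your argument is more general --- it applies verbatim to any diffuse $\sigma$-finite measure and is essentially the ``only if'' half of Theorem \ref{Th.1} later in the paper --- at the cost of invoking the range theorem for atomless measures instead of an explicit basis computation.

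One step as written is false, however: a countably infinite pairwise disjoint family $\{A_n\}\subset A$ with \emph{common} measure $\alpha>0$ cannot exist when $\mu(A)<\infty$, since then $\mu(A)\ge\sum_n\alpha=\infty$. The repair is immediate and costs nothing: take any disjoint $A_n\subset A$ with $\mu(A_n)>0$ (repeated halving via atomlessness gives, say, $\mu(A_n)=\mu(A)/2^n$), set $\phi_n=\chi_{A_n}/\sqrt{\mu(A_n)}$, and both the orthogonality of the images and the lower bound $\|M_f\phi_n\|_2^2=\mu(A_n)^{-1}\int_{A_n}|f|^2\,d\mu\ge\varepsilon^2$ survive; equimeasurability was never needed. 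Separately, your $p=\infty$ sentence tacitly reads $S_\infty(\mathcal H)$ as $B(\mathcal H)$; under the convention $S_\infty(\mathcal H)=B_0(\mathcal H)$ stated in Subsection \ref{intro 2}, your own non-compactness argument would yield $S_\infty(\mathbb R)=\{0\}$ rather than $L^\infty(\mathbb R)$. The reading you adopted is the one consistent with the lemma's claim and with the $E_\infty$ column of Fig.\ \ref{Fig2}, but the clash with the stated convention deserves an explicit remark.
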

\begin{proof}
~~	Consider the Gabor orthonormal basis $\{e_{n,m}\}_{n,m}$ of $L^2(\mathbb R)$, where  $e_{n,m}(x)=e^{2\pi imx}\chi_{[n,n+1]}$ . Let $f\in L^\infty(\mathbb R)$ and $1\leq p<\infty$. Then $f\in S_p(\mathbb R)$ if and only if $$||\pi(f)||_p=\sum_{n,m\in\mathbb Z}\int_{\mathbb R} |f|^pe_{n,m}.\overline{e_{n,m}}d\mu=\sum_{n,m}\int_n^{n+1}|f|^pd\mu<\infty .$$
	\noindent
	This is unconditional sum of positive numbers which converges if and only if $\int_{\mathbb R}|f|^pd\mu=0$ and hence if and only if $f=0$. \\
	Similarly, using the orthonormal basis $\{e^{2\pi inx}\}_{n\in\mathbb Z}$ for $L^2(\mathbb S^1)$, one can prove that $f\in S_p(\mathbb S^1)$ if and only if $f=0$. The $p=\infty$ situation is easily seen to be true in both cases.
\end{proof}

\begin{remark} The above examples may lead us to the hypothesis that $S_p(X)$ is non-trivial if and only if $X$ is discrete, which is not true as proved below.
\end{remark}
\begin{theorem}\label{Th.1}
	For a measure space $X$ with a $\sigma$-finite  Borel measure $\mu$, the ideal $S_p(X)$ for $1\leq p <\infty$ is trivial(zero) if and only $\mu$ is without atom. \footnote[1]{This proof  is motivated by the idea provided by Prof. Martin Argerami.}
\end{theorem}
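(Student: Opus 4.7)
The plan is to prove both implications by contrapositive, exploiting the fact that $\pi(f)=M_f$ is a multiplication operator whose spectral behaviour is governed by the level sets of $f$ together with the atomic versus diffuse structure of $\mu$.

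For the ``$\mu$ atomless $\Rightarrow S_p(X)=\{0\}$'' direction, I would prove the stronger statement that $M_f$ is not even compact when $f\in L^\infty(X)$ is not essentially zero. Choose $\epsilon>0$ so that $A=\{|f|>\epsilon\}$ has positive measure; by $\sigma$-finiteness one may replace $A$ by $A\cap X_n$ for a suitable $n$ in an exhaustion, reducing to the case $\mu(A)<\infty$. The crucial step is to partition $A$ into a countable family $\{A_n\}_{n\geq 1}$ of pairwise disjoint measurable subsets of positive measure. This is possible because atomlessness allows one to split any positive-measure set into two pieces of positive measure; iterating gives the desired partition. Setting $u_n=\chi_{A_n}/\sqrt{\mu(A_n)}$ produces an orthonormal sequence in $L^2(X)$ satisfying $\|M_f u_n\|_2^2=\mu(A_n)^{-1}\int_{A_n}|f|^2\,d\mu\geq \epsilon^2$ for every $n$. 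Hence $(M_f u_n)$ has no norm-convergent subsequence, $M_f$ fails to be compact, and \emph{a fortiori} $M_f\notin S_p(L^2(X))$ for any $p<\infty$.

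For the converse direction, I would show that any atom produces a nonzero element of $S_p(X)$. If $A$ is an atom then $\sigma$-finiteness forces $\mu(A)<\infty$, since some $X_n$ in an exhaustion $X=\bigcup X_n$ must satisfy $\mu(A\cap X_n)=\mu(A)$. The defining property of an atom then implies that every measurable function on $A$ is almost everywhere constant, so $L^2(A)$ sits inside $L^2(X)$ as a one-dimensional subspace. Consequently $M_{\chi_A}$ is the orthogonal projection onto this line, hence a rank-one operator, and therefore belongs to $S_p(L^2(X))$ for every $p\in[1,\infty]$. Thus $\chi_A$ is a nonzero member of $S_p(X)$.

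The only nontrivial measure-theoretic ingredient is the partition step in the first direction; this is a standard consequence of the fact that an atomless $\sigma$-finite measure attains all intermediate values on any positive-measure set (Sierpinski's theorem), and I expect it to be the main place where care is needed. Everything else reduces to elementary computations with multiplication operators, characteristic functions, and the compactness criterion via orthonormal sequences.
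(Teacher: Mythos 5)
Your proof is correct, but it takes a genuinely different route from the paper's. For the direction ``atom $\Rightarrow S_p(X)\neq\{0\}$'', the paper decomposes $X$ into a diffuse part plus a countable set of atoms (invoking that atoms of a regular Borel measure are essentially singletons), builds $f=\sum_{x\in D}\beta_x\chi_{x}$ with $\ell^p$ coefficients, and computes $\|\pi(f)\|_p$ against the orthonormal family $\mu(\{x\})^{-1/2}\chi_{x}$; you instead observe that a single atom $A$ carries a one-dimensional $L^2(A)$, so $M_{\chi_A}$ is a rank-one projection. Your version is leaner and avoids the structural facts about atoms that the paper cites. For the converse, the paper argues via the spectral theory of compact operators: a nonzero $f\in S_p(X)$ has countable essential range accumulating only at $0$, so $f=\sum_j\alpha_j\chi_{E_j}$ with each spectral projection $\pi(\chi_{E_j})$ finite rank, which is impossible if every $E_j$ meets only the diffuse part. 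Your argument bypasses the spectral decomposition entirely and shows directly that $M_f$ is not compact for any $f\not\equiv 0$, by splitting $\{|f|>\epsilon\}$ into infinitely many disjoint positive-measure pieces and producing an orthonormal sequence whose images under $M_f$ are bounded below in norm (and, having disjoint supports, are in fact mutually orthogonal, so no subsequence is Cauchy). This splitting step is essentially the content the paper leaves implicit when it asserts that $\pi(\chi_{E_j})$ cannot be finite rank on a diffuse set, so your write-up makes explicit precisely the point where the paper is most terse. Two small polish items: justify ``no norm-convergent subsequence'' either by the orthogonality of the images $M_fu_n$ or by the standard fact that compact operators send weakly null sequences to norm-null ones, and note that the iterated splitting need only produce countably many disjoint positive-measure subsets of $A$, not a genuine partition.
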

\begin{proof}
	Suppose $(X,\mu)$ is atomic. Then we have $X=F\cup(\cup_{x\in D}\{x\})$, where $F$ is the diffuse set(without atoms) and $D$ is the set of all atoms (atoms in regular Borel measure are always almost singleton and countable\cite[2.2]{Chung}). Let $f=\sum_{x\in D} \beta_x \chi_{x}$ such that $\sum_{x\in D}|\beta_x|^p<\infty$. Notice that the set $\left\{\frac{1}{\mu({x})^{1/2}}\chi_{x}\right\}$ is orthonormal . Hence, $$||\pi(f)||_p=\sum_{x\in D}|\beta_x|^p<\infty.$$
	It follows that $f\in S_p(X)$ and \begin{equation} S_p(X)=\{f\in L^\infty(X) :\operatorname{supp(f)}\subset D \text{ and } \sum_{x\in D}|f(x)|^p<\infty\}\end{equation} for all $1\leq p<\infty$.
	Conversely, $S_p(X)$ is non-empty for all $1\leq p<\infty$. Let $f\in S_p(X)$ for a fixed $p$. Then $\pi(f)$ is compact(all Schatten class operators are compact) and $\sigma(\pi(f))=\overline{f(X)}$ must be finite or a set with limit point $0$. So $f$ must be a simple function, $f=\sum_j\alpha_j\chi_{E_j}$. Here projection $\pi(\chi_{E_j})$ must be finite rank except when $\alpha_j=0$. Now, if the set $D$ of atoms is empty then $\mu(E_j\cap F)>0$ for every single $j$ and this will force that $\chi_{E_j}$ is not finite rank. Thus, $D$ must be non-empty.  
\end{proof}

\begin{remark} Notice that $S_p(\mathbb R)$ and $S_p(\mathbb S^1)$ were empty for $1\leq p<\infty$ because the Haar measure on $\mathbb R$ and $\mathbb S^1$ are non-atomic. $S_p(\mathbb Z)$ was non-empty because $\mathbb Z$ with counting measure is atomic (singletons are atoms). There are in fact compact non-discrete groups which are atomic. For example, consider the infinite product $\Pi_J G$ with counting measure, where $G$ is a finite group with discrete topology. Clearly, $\Pi_J G$ is a non-discrete compact group which is atomic (singletons are the atoms, and diffuse is empty). For groups with left/right invariant measures, even stronger result holds.
\end{remark}
\begin{theorem}
	Let $G$ be a group equipped with left/right invariant $\sigma$-finite Borel measure $\mu$. Then $S_p(G)$ for $1\leq p <\infty$ is non-empty if and only if $\mu$ is counting measure (up to product with scalars)  and $G$ is a countable group.
\end{theorem}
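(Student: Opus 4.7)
The strategy is to combine Theorem~\ref{Th.1} with translation invariance of $\mu$. For the backward implication, assume $G$ is countable and $\mu=c\cdot(\text{counting measure})$ for some $c>0$. Then $L^{2}(G)$ admits the orthonormal basis $\{c^{-1/2}\chi_{\{x\}}\}_{x\in G}$, and for $f\in\ell^{\infty}(G)=L^{\infty}(G)$ the operator $\pi(f)$ is diagonal in this basis with eigenvalues $f(x)$. Consequently $\|\pi(f)\|_{p}^{p}=\sum_{x\in G}|f(x)|^{p}$, and any nonzero $f\in\ell^{p}(G)\subseteq\ell^{\infty}(G)$ (the containment follows from $\|f\|_{\infty}\le\|f\|_{p}$) provides a nonzero element of $S_{p}(G)$. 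This is the direct analogue of the $\mathbb{Z}$-lemma already established.

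For the forward implication, suppose $S_{p}(G)\neq\{0\}$ for some $1\le p<\infty$. Theorem~\ref{Th.1} immediately rules out $\mu$ being without atom, so $\mu$ has at least one atom. Invoking the fact used in the proof of Theorem~\ref{Th.1} (attributed to Chung) that atoms of a $\sigma$-finite Borel measure are almost singletons, there exists $x_{0}\in G$ with $\mu(\{x_{0}\})=c>0$. Left (or right) translation invariance now upgrades this pointwise: for every $g\in G$ the singleton $\{gx_{0}\}$ is measurable with $\mu(\{gx_{0}\})=\mu(\{x_{0}\})=c$, and as $g$ ranges over $G$ the element $gx_{0}$ sweeps out all of $G$. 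Hence every point of $G$ is an atom of mass exactly $c$.

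The countability of $G$ and the identification of $\mu$ then come from $\sigma$-finiteness. Writing $G=\bigcup_{n}A_{n}$ with $\mu(A_{n})<\infty$, any $k$ distinct points of $A_{n}$ contribute at least $kc$ to the total mass, so $|A_{n}|\le\mu(A_{n})/c$, and $G$ is a countable union of finite sets. Since every singleton is measurable and $G$ is countable, the Borel $\sigma$-algebra coincides with the full power set of $G$, and $\sigma$-additivity yields $\mu(A)=c|A|$ for every $A\subseteq G$, identifying $\mu$ as a scalar multiple of counting measure.

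The main technical point to nail down is the measurability of singletons, needed before translation invariance can be applied to them; this is automatic in the Hausdorff topological group setting implicit in the phrase ``invariant Borel measure''. A secondary nuance is the ``almost singleton'' phrasing of the atom structure, but it still guarantees a genuine $x_{0}$ with positive mass, which is all that the translation argument requires in order to propagate positivity of $\mu$ from one singleton to every point of $G$.
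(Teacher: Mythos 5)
Your proof is correct and follows essentially the same route as the paper: reduce to Theorem~\ref{Th.1} to get an atom, use translation invariance to make every singleton an atom of the same mass, and use $\sigma$-finiteness to force countability and identify $\mu$ as a multiple of counting measure. You are somewhat more explicit than the paper in two places — spelling out the converse via the diagonal orthonormal basis (as in the $\mathbb Z$ lemma) and deriving the countability bound $|A_n|\le\mu(A_n)/c$ directly rather than citing the countability of atoms in a $\sigma$-finite space — but these are elaborations, not a different argument.
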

\begin{proof}
	From Theorem 1.  we know that $S_p(G)$ is non-empty if and only if $\mu$ is atomic. Let $\{x\}$ be any atom. Then $\mu(x)>0$. By left/right translation invariance we know that $\mu(\{g\})=\mu(\{x\})$ for all $g\in G$. Hence each singleton in $G$ is an atom. However, atoms in a $\sigma$-finite measure space are countable \cite[2.2,ex.11]{Chung}. Thus $G$ must be a countable group in which all singletons are atoms of the same measure. Hence $\mu$ must be some scalar multiple of counting measure. 
\end{proof}
\section{Functors associated to Schatten classes }
\label{sec:2}
\subsection{Category of representations of a $C^*$-algebra.}
\label{subsec 1}The above constructions characterize the family of spaces(with measure) for which $S_p(X)$ is non-empty. Studying $S_p(X)$ for these spaces results in some abstract non-sense. We take a short exact sequence of Banach spaces, as defined in \cite{Castillo}. Consider the category of Banach spaces, $\bf Ban_1$. For each $1\leq p<\infty$, we get an exact sequence $E_p$ -
\begin{equation}
\begin{tikzcd}
0 \arrow[r]  & S_p(X)\arrow[r,"\pi"] & S_p(L^2(X))\arrow[r] &\frac{S_p(L^2(X))}{\pi(S_p(X))}\arrow[r] &0 
\end{tikzcd}
\end{equation}
 Also for each pair $(p,q)$ such that $1\leq p<q<\infty$, we have the natural contractive injections - \begin{equation}i_{p,q}:S_p(L^2(X))\to S_q(L^2(X))~~~\text{and }~~~~ \pi^{-1}\circ i_{p,q}\circ \pi:S_p(X)\to S_q(X)\end{equation}
It is easy to verify that $\{E_p\}_{p\geq 1 }$ forms a directed system in the category of exact sequences of Banach spaces (morphism between two exact sequences $E_p$ and $E_q$ for $q>p$, denoted by $\phi_{p,q}$, are required to be contractions between the corresponding nodes) 
\begin{lemma}
	The pair $\{E_p , \phi_{p,q}\}$ forms a directed system on the set $[1,\infty)$ in the category of exact sequences of Banach spaces.
\end{lemma}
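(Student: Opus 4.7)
The plan is to verify that $(\{E_p\}_{p\geq 1},\{\phi_{p,q}\}_{1\leq p\leq q})$ satisfies the axioms of a directed system over $[1,\infty)$. Since $[1,\infty)$ is totally ordered and hence trivially directed, the substantive work is to establish (i) each $\phi_{p,q}$ is a well-defined morphism in the category of exact sequences of Banach spaces, (ii) $\phi_{p,p}=\mathrm{Id}_{E_p}$, and (iii) the cocycle relation $\phi_{q,r}\circ\phi_{p,q}=\phi_{p,r}$ for $p\leq q\leq r$.

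First I would pin down the three components of $\phi_{p,q}$. On the middle node take the natural contractive inclusion $i_{p,q}$ of Schatten classes, which is a $\mathbf{Ban}_1$-morphism precisely because $\|\cdot\|_q\leq\|\cdot\|_p$. On the left node take $\pi^{-1}\circ i_{p,q}\circ\pi$, where $S_p(X)$ is normed by the pullback $\|f\|_p:=\|\pi(f)\|_p$; with this norm $\pi$ is an isometric embedding, so the composition is (up to identification) the inclusion $S_p(X)\hookrightarrow S_q(X)$ and is therefore contractive. On the right node, define the arrow to be the map induced on the quotients $S_p(L^2(X))/\pi(S_p(X))\to S_q(L^2(X))/\pi(S_q(X))$ by $i_{p,q}$.

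The crux is commutativity of the two squares making up $\phi_{p,q}$ and well-definedness of the induced quotient arrow. The left square (with the two copies of $\pi$) commutes by the very definition of the left-hand arrow. The right square (with the two quotient projections) commutes once the induced map is known to be well-defined, and for that I only need $i_{p,q}(\pi(S_p(X)))\subset\pi(S_q(X))$; this follows from the containment $S_p(X)\subset S_q(X)$ obtained by pulling $S_p(L^2(X))\subset S_q(L^2(X))$ back through $\pi$. Contractivity of the induced quotient map is then automatic from the universal property of the quotient norm. This bookkeeping about which norm $S_p(X)$ is given is essentially the only place any care is required; it is also the main (and really the only) obstacle, since without fixing the pullback norm the arrow $\pi^{-1}\circ i_{p,q}\circ\pi$ cannot be sensibly declared contractive.

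The remaining axioms are formal. One has $\phi_{p,p}=\mathrm{Id}_{E_p}$ because $i_{p,p}$ is the identity of $S_p(L^2(X))$, and the cocycle relation $\phi_{q,r}\circ\phi_{p,q}=\phi_{p,r}$ reduces at each of the three nodes to $i_{q,r}\circ i_{p,q}=i_{p,r}$, which is the identity of three literal inclusion maps on the level of operators. Putting these pieces together gives the directed system claimed by the lemma, as an essentially direct consequence of the standard Schatten-norm inequality $\|\cdot\|_q\leq\|\cdot\|_p$ recalled in Section~\ref{intro 2}.
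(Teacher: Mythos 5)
Your proposal is correct and follows the same route as the paper, which simply declares the lemma an "easy verification" by pointing at the commuting diagram of rows and columns; you have supplied the details (componentwise definition of $\phi_{p,q}$, commutativity of the two squares, well-definedness and contractivity of the induced quotient map, and the identity and cocycle axioms) that the paper leaves implicit. Your observation that $S_p(X)$ must carry the pullback norm $\|f\|_p:=\|\pi(f)\|_p$ for the left-hand arrow to be contractive is a point of care the paper does not make explicit, but it does not change the argument's structure.
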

\begin{proof}
	This is just an easy verification exercise. Fig. \ref{Fig2} gives us a complete picture. Vertical columns are short exact sequences and each row, $D_0, D_1$ and $D_2$ forms a directed system in the category of Banach spaces. Notice that each rectangular block commutes.
	\begin{figure} 
		\adjustbox{scale=1,center}{%
			\begin{tikzcd}
			& 0                                                   & 0                                                  & 0                                           & 0                                    &     \\
			D_2& \frac{S_1(L^2(X))}{\pi(S_1(X))} \arrow[u] \arrow[r, dashed] & \frac{S_p(L^2(X)}{\pi(S_p(X))} \arrow[u] \arrow[r, dashed] & \frac{B_0(L^2(X))}{\pi(C_0(X))} \arrow[r] \arrow[u] & \frac{B(L^2(X))}{\pi(L^\infty(X))} \arrow[u] \\
			D_1  & S_1(L^2(X)) \arrow[u] \arrow[r, "i", dashed]        & S_p(L^2(X)) \arrow[r, "i", dashed] \arrow[u]       & B_0(L^2(X)) \arrow[u] \arrow[r, "i"]        & B(L^2(X)) \arrow[u]                   \\
			D_0  & S_1(X) \arrow[u, "\pi"] \arrow[r,"\pi^{-1}i_{1,p}\pi", dashed]           & S_p(X) \arrow[u, "\pi"] \arrow[r,"", dashed]          & C_0(X) \arrow[u, "\pi"] \arrow[r]           & L^\infty(X) \arrow[u, "\pi"]         \\
			& 0 \arrow[u]                                         & 0 \arrow[u]                                        & 0 \arrow[u]                                 & 0 \arrow[u]                          &     \\
			& E_1 \arrow[r, dashed]                               & E_p \arrow[r, dashed]                              & E_0 \arrow[r]                               & E_\infty                             &    
			\end{tikzcd}
		}	\caption{}\label{Fig2}
	\end{figure}
\end{proof}
 As we mentioned in remark \ref{Rem1}, the definition of $S_p(X)$ depends on the left regular representation of $L^\infty(X)$. Therefore, it leads us to study the more general case. Let $\mathcal A$ be a $C^*$-algebra, and $\pi:\mathcal A\to B(\mathcal H_\pi)$ be a representation(not necessarily faithful) of $\mathcal A$. We define $$S_p^\pi(\mathcal A)=\pi^{-1}\left(S_p(\mathcal H_\pi)\right) .$$
Now, we have an exact sequence $E_p^\pi$, and for $p>q$ we have the following morphism between $E_p^\pi\to E_q^\pi$ arising through natural maps.
\begin{equation}
\begin{tikzcd}
0 \arrow[r] & \frac{S_p^\pi(\mathcal A)}{\operatorname{ker}(\pi)} \arrow[r] \arrow[d] & S_p(\mathcal H_\pi) \arrow[r] \arrow[d] & \frac{S_p(\mathcal H_\pi)}{\pi(S_p^\pi(\mathcal A))} \arrow[r] \arrow[d] & 0 & E_p^\pi  \\
0 \arrow[r] & \frac{S_q^\pi(\mathcal A)}{\operatorname{ker}(\pi)} \arrow[r]           & S_q(\mathcal H_\pi) \arrow[r]           & \frac{S_q(\mathcal H_\pi)}{\pi(S_q^\pi(\mathcal A))} \arrow[r]           & 0 & E_q^\pi 
\end{tikzcd}
\end{equation}
\begin{theorem}\label{Th.3}
	Schatten classes on Hilbert spaces gives a functor from the category $\mathcal Rep(\mathcal A)$ of faithful representations of $C^*$-algebra $\mathcal A$ to the category $DS_{[1,\infty)}$ of directed systems on the directed set $\{p~:1\leq p< \infty\}$.
\end{theorem}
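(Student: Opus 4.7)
The plan is to construct the functor $\mathcal{F}$ that sends each faithful representation $\pi:\mathcal{A}\to B(\mathcal{H}_\pi)$ to its associated directed system $\{E_p^\pi,\phi_{p,q}^\pi\}_{p\in[1,\infty)}$ of short exact sequences described in Equation (4), and each intertwining morphism $U:\pi_1\to\pi_2$ to the induced morphism of directed systems obtained by applying natural maps level-by-level.

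First I would check that $\{E_p^\pi,\phi_{p,q}^\pi\}$ really is a directed system in the category of exact sequences of Banach spaces. This is the evident analogue of the preceding Lemma with $\pi$ an arbitrary faithful representation rather than the left regular one on $L^\infty(X)$: exactness of each $E_p^\pi$ holds by construction, the connecting morphisms $\phi_{p,q}^\pi$ come from the contractive Schatten inclusions $i_{p,q}:S_p(\mathcal{H}_\pi)\hookrightarrow S_q(\mathcal{H}_\pi)$ and their pullbacks through $\pi^{-1}$, and the resulting rectangles commute term-by-term exactly as in Fig.~2.

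The substantive step is defining $\mathcal{F}(U)$ for an intertwiner $U:\mathcal{H}_{\pi_1}\to\mathcal{H}_{\pi_2}$. On the middle row of each short exact sequence I would use conjugation $\mathrm{Ad}_U:T\mapsto UTU^*$, which sends $S_p(\mathcal{H}_{\pi_1})$ contractively into $S_p(\mathcal{H}_{\pi_2})$ whenever $U$ is an isometric intertwiner (and boundedly, with norm $\le\|U\|^2$, in general). On the left term the map should be $a\mapsto a$ on $\mathcal{A}$; the intertwining relation $U\pi_1(a)=\pi_2(a)U$ combined with faithfulness of both representations shows this is exactly the transfer of $\mathrm{Ad}_U$ through $\pi_1^{-1},\pi_2^{-1}$, after which it descends unambiguously to the quotient term. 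Compatibility with the $\phi_{p,q}^\pi$ is automatic because the Schatten inclusions $i_{p,q}$ are canonical and therefore commute with $\mathrm{Ad}_U$ on the nose.

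The main obstacle is ensuring that the induced map on the left column is well defined: one must verify that $\pi_1(a)\in S_p(\mathcal{H}_{\pi_1})$ forces $\pi_2(a)\in S_p(\mathcal{H}_{\pi_2})$, which is not true for arbitrary bounded intertwiners. The clean case is when $U$ is an isometric or unitary intertwiner, for then $\pi_2(a)=U\pi_1(a)U^*$ on the appropriate range and $\mathrm{Ad}_U$ preserves the Schatten class, giving the containment with the correct norm estimate; this is the natural reading of morphisms in $\mathcal{Rep}(\mathcal{A})$. Once this is granted, the two functor axioms are routine: $\mathcal{F}(\mathrm{Id})=\mathrm{Id}$ since $\mathrm{Ad}_I$ acts as the identity at every level, and $\mathcal{F}(V\circ U)=\mathcal{F}(V)\circ\mathcal{F}(U)$ follows from $\mathrm{Ad}_{VU}=\mathrm{Ad}_V\circ\mathrm{Ad}_U$ together with functoriality of the quotient construction. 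Assembling these pieces yields the desired functor $\mathcal{F}:\mathcal{Rep}(\mathcal{A})\to DS_{[1,\infty)}$.
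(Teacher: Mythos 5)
The paper's own ``proof'' of this theorem is a single sentence asserting that $\pi\mapsto E^\pi=\{E_p^\pi\}_{p\geq 1}$ is a well-defined functor; it never specifies what $\mathcal S$ does to a morphism of $\mathcal Rep(\mathcal A)$, so your proposal is not an alternative route but rather the only reasonable attempt to actually complete the argument. On objects you agree with the paper, and your observation that the entire content lies in the morphism assignment --- one must check that an intertwiner $U$ with $U\pi_1(a)=\pi_2(a)U$ carries $S_p^{\pi_1}(\mathcal A)$ into $S_p^{\pi_2}(\mathcal A)$ --- is precisely the point the paper silently skips.

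Your resolution of that obstacle, however, is only half right. For a \emph{unitary} intertwiner everything goes through: $\pi_2(a)=U\pi_1(a)U^*$, hence $S_p^{\pi_1}(\mathcal A)=S_p^{\pi_2}(\mathcal A)$, $\mathrm{Ad}_U$ is an isometry of Schatten classes commuting with the inclusions $i_{p,q}$, and the functor axioms follow from $\mathrm{Ad}_{VU}=\mathrm{Ad}_V\circ\mathrm{Ad}_U$. But the isometric non-surjective case you also admit does \emph{not} work: if $U$ is a proper isometry then $U\pi_1(a)U^*=\pi_2(a)UU^*=\pi_2(a)P$ with $P$ the range projection, which controls $\pi_2(a)$ only on the range of $U$ and says nothing about it on the orthogonal complement. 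The paper's own Remark \ref{Rem1} supplies the counterexample mechanism: the isometry $v\mapsto v\otimes e_0$ intertwines the left regular representation $\pi$ of $\ell^\infty(\mathbb Z)$ with the faithful representation $\pi\otimes\mathbb 1$, yet $\pi(f)\otimes\mathbb 1$ is not even compact for $f\neq 0$, so no nonzero element of $S_1^{\pi}$ lands in $S_1^{\pi\otimes\mathbb 1}$. You must therefore restrict to unitary intertwiners (equivalently, read the morphisms of $\mathcal Rep(\mathcal A)$ as unitary equivalences); with that restriction your argument is complete. Your analysis in fact exposes that the theorem as stated, with morphisms generated by arbitrary intertwining operators, is not correct without such a restriction --- a defect the paper's one-line proof conceals rather than addresses.
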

\begin{proof}
	The assignment $\mathcal S: \mathcal Rep(\mathcal A)\to DS_{[1,\infty)}$ given by $$S(\pi)=E^\pi$$ is a well-defined functor, where $E^\pi$ is the directed system $\{E_p^\pi\}_{p\geq 1}$
	
$$E_1^\pi\to\hdots E_p^\pi\to\hdots $$
\end{proof}
\subsection{Category of group representations.}
\label{subsec 2}
Let $G$ be a locally compact Hausdorff group. Consider the unitary representation $\pi:G\to U(\mathcal H_\pi)$ of $G$ on a Hilbert space $\mathcal H_\pi$. Consider the induced non-degenerate representation  $\pi:L^1(G)\to B(\mathcal H_\pi)$ of Banach-* algebra $L^1(G)$ \cite[3.2]{Folland}, which is given by  $$\left<\pi(f)u,v\right>=\int_G f(x)\left<\pi(x)u,v\right>dx .$$ Now, we define \begin{equation}
T_p^\pi (G)=\pi^{-1}(S_p(\mathcal H_\pi))
\end{equation}
the pullback (through $\pi$) of Schatten p-class operators on $\mathcal H_\pi$. Clearly, $T_p^\pi(G)$ is an ideal in $L^1(G)$.\\
With this notion, we get a functor associated Schatten classes as described in next theorem.

\begin{theorem}
	Let $G$ be a locally compact Hausdorff group. Associated to Schatten classes, there is a functor $\mathcal S :\mathcal Rep(G) \to DS_{[1,\infty)}$.
\end{theorem}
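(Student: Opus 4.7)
The plan is to transport the argument of Theorem~\ref{Th.3} from the category of representations of a $C^*$-algebra to the category of unitary representations of $G$, using the integrated representation on the Banach $*$-algebra $L^1(G)$ as the bridge between the group side and the operator side.

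Fix a unitary representation $\pi \in \mathcal{R}ep(G)$. Its integrated form $\pi: L^1(G) \to B(\mathcal{H}_\pi)$ is a non-degenerate continuous $*$-representation, so $T_p^\pi(G) = \pi^{-1}(S_p(\mathcal{H}_\pi))$ is a two-sided ideal of $L^1(G)$. For each $1 \leq p < \infty$ I would write down the short exact sequence of Banach spaces
$$E_p^\pi:\quad 0 \longrightarrow T_p^\pi(G)/\ker(\pi) \longrightarrow S_p(\mathcal{H}_\pi) \longrightarrow S_p(\mathcal{H}_\pi)/\pi(T_p^\pi(G)) \longrightarrow 0,$$
in direct analogy with the $C^*$-algebra case. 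Since the natural inclusion $i_{p,q}: S_p(\mathcal{H}_\pi) \hookrightarrow S_q(\mathcal{H}_\pi)$ is contractive for $p \leq q$, the induced map $\pi^{-1} \circ i_{p,q} \circ \pi$ on the pullback and the descended map on the quotient assemble into morphisms $\phi_{p,q}: E_p^\pi \to E_q^\pi$. The verification that $\{E_p^\pi, \phi_{p,q}\}$ is a directed system in the category of short exact sequences of Banach spaces is the same diagram chase as in the preceding lemma on $\{E_p, \phi_{p,q}\}$. Setting $\mathcal{S}(\pi) := \{E_p^\pi\}_{p \geq 1}$ defines the assignment on objects.

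For the morphism assignment, let $U: \mathcal{H}_{\pi_1} \to \mathcal{H}_{\pi_2}$ be a bounded intertwiner in $\mathcal{R}ep(G)$, so that $U \pi_1(g) = \pi_2(g) U$ for every $g \in G$; by weak-integral arguments this lifts to $U \pi_1(f) = \pi_2(f) U$ for every $f \in L^1(G)$. The ideal property $\|A T B\|_p \leq \|A\|\, \|T\|_p\, \|B\|$ shows that $T \mapsto U T U^*$ is a bounded linear map $S_p(\mathcal{H}_{\pi_1}) \to S_p(\mathcal{H}_{\pi_2})$; the intertwining identity makes this map interact coherently with the images $\pi_i(T_p^{\pi_i}(G))$ and descends it to the quotients, producing the components of a morphism $\mathcal{S}(U): E^{\pi_1} \to E^{\pi_2}$ at each level $p$. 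Compatibility with the transitions $\phi_{p,q}$ follows because conjugation by $U$ commutes with the canonical embeddings $i_{p,q}$, so $\mathcal{S}(U)$ is a genuine morphism of directed systems. Functoriality $\mathcal{S}(\operatorname{Id}) = \operatorname{Id}$ and $\mathcal{S}(V \circ U) = \mathcal{S}(V) \circ \mathcal{S}(U)$ is immediate from $(VU)T(VU)^* = V(UTU^*)V^*$.

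The main obstacle I anticipate is making precise the behaviour of $T \mapsto UTU^*$ on the leftmost node of $E_p^\pi$: writing $U \pi_1(f) U^* = \pi_2(f)\, UU^*$ shows the image is in $S_p(\mathcal{H}_{\pi_2})$ but need not lie in $\pi_2(T_p^{\pi_2}(G))$ unless $UU^*$ is appropriately well-behaved. The cleanest fix is to identify the left node via the canonical embedding $T_p^{\pi_i}(G)/\ker(\pi_i) \hookrightarrow S_p(\mathcal{H}_{\pi_i})$ and to define the morphism of short exact sequences at the Hilbert-space level; once this categorical bookkeeping is fixed, the remaining verifications are routine diagram chases parallel to those behind Figure~\ref{Fig2}.
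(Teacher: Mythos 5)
Your proposal follows essentially the same route as the paper: the paper's own proof consists of the single assertion that $\pi \mapsto \{E_p^\pi\}_{p\ge 1}$ is ``again an easy verification,'' with $E_p^\pi$ built from the integrated representation of $L^1(G)$ exactly as you do, and with the morphisms declared to be ``the natural ones similar to those considered in Theorem~\ref{Th.3}.'' Your object-level construction (the ideals $T_p^\pi(G)$, the short exact sequences $E_p^\pi$, the transition maps $\phi_{p,q}$ induced by the contractive inclusions $i_{p,q}$) matches the paper's, and you supply strictly more detail at the morphism level than the paper does.

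The difficulty you flag in your final paragraph is genuine, and the paper does not address it. For a general bounded intertwiner $U$ with $U\pi_1(f)=\pi_2(f)U$ for all $f\in L^1(G)$, conjugation $T\mapsto UTU^*$ does map $S_p(\mathcal H_{\pi_1})$ into $S_p(\mathcal H_{\pi_2})$, but it need not carry $\pi_1\bigl(T_p^{\pi_1}(G)\bigr)$ into $\pi_2\bigl(T_p^{\pi_2}(G)\bigr)$: one only gets $U\pi_1(f)U^*=\pi_2(f)UU^*$, and there is no reason for $\pi_2(f)$ itself to be Schatten. Concretely, if $\pi_1$ is finite dimensional and $\pi_2$ is an infinite multiple of $\pi_1$, with $U$ the isometric inclusion onto the first summand, then $T_p^{\pi_1}(G)=L^1(G)$ while $T_p^{\pi_2}(G)=\ker\pi_2$, so neither the left-node map $T_p^{\pi_1}(G)/\ker\pi_1\to T_p^{\pi_2}(G)/\ker\pi_2$ nor the induced map on the right-hand quotients is well defined. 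Your proposed fix (identifying the left node with its image inside $S_p(\mathcal H_{\pi_i})$ and working at the Hilbert-space level) does not by itself remove the problem, because the quotient node still requires $UTU^*$ to respect the subspaces $\pi_i\bigl(T_p^{\pi_i}(G)\bigr)$. To make the statement literally true one must either restrict the morphisms of $\mathcal Rep(G)$ (for instance to unitary equivalences, for which $T_p^{\pi_1}(G)=T_p^{\pi_2}(G)$ and everything goes through) or weaken the target so that only the middle node $S_p(\mathcal H_\pi)$ is required to be functorial. This is a gap in the paper's proof as much as in yours; you deserve credit for noticing it, but the proposal as written does not close it.
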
 
\begin{proof}
	This is again an easy verification that  $\pi\to \{E_p^\pi\}_{p\geq 1}$ is a functor.
	Where $E_p^\pi$ is the exact sequence given by 
\begin{equation}	\begin{tikzcd}
	0 \arrow[r] & \frac{T_p^\pi(G)}{\operatorname{ker}(\pi)} \arrow[r] & S_p(\mathcal H_\pi) \arrow[r] & \frac{S_p(\mathcal H_\pi)}{\pi(T_p^\pi(G))} \arrow[r] & 0
	\end{tikzcd}
	\end{equation}
	and the morphisms are again the natural ones similar to the ones  considered in Theorem \ref{Th.3}.
	\end{proof}
 
\subsection{Some problems}
We end this note with a few problems.
\begin{enumerate}
	\item Can we give a condition on topology of $G$, similar to the condition on measure on $X$ in Theorem \ref{Th.1}, for the left regular representation $\pi$ of a locally compact Hausdorff group $G$ such that the $\pi^{-1}(S_p( L^2(G)))$ is non-empty?
	\item  It is shown in \cite[Th. 4.1]{Castillo1} that direct limits of a directed system of Banach spaces exist in the category $Ban_1$.  Its also been shown in the remark after \cite[Def. 2.4]{Kobi} that the filtered limits  may not be exact. Does filtered limit from Fig. \ref{Fig2} $$0\to \underset{\longrightarrow}{D_0}\to \underset{\longrightarrow}{D_1}\to \underset{\longrightarrow}{D_2}\to 0$$ preserve the exactness? If yes then what about the same problem with arbitrary representation $\pi$?
\end{enumerate}
\begin{acknowledgements}
We would like to acknowledge the financial support provided by C.S.I.R and U.G.C in form of Junior Research Fellowship. 
\end{acknowledgements}

%
%



\end{document}